\newtheorem{Theorem}{Theorem}
\newtheorem{Lemma}{Lemma}
\newtheorem{Rem}{Remark}
\newtheorem{Prop}{Proposition}
\title{A remark on Uzawa's algorithm and an application to mean field games systems}
\author{Charles Bertucci}
\address{Université Paris-Dauphine, PSL Research University,UMR 7534, CEREMADE, 75016 Paris, France}
\date{} 
\begin{document}
\begin{abstract}
In this paper, we present an extension of Uzawa's algorithm and apply it to build approximating sequences of mean field games systems. We prove that Uzawa's iterations can be used in a more general situation than the one in it is usually used. We then present some numerical results of those iterations on discrete mean field games systems of optimal stopping, impulse control and continuous control.
\end{abstract}
\maketitle
\tableofcontents
\section*{Introduction}
This paper is concerned with the study of an extension of Uzawa's algorithm. We show that the standard Uzawa's algorithm can be used to find solutions of systems similar to the ones characterizing saddle points of lagrangians, even though there is not a proper langrangian associated with this system. The second part of this paper is concerned with the application of this remark to build approximating sequences of solutions of Mean Field Games (MFG) systems.\\

Uzawa's algorithm was introduced to solve minimization problems with constraints. The main idea of this algorithm is to use a projected gradient descent on the dual problem. Because of its simplicity and efficiency, Uzawa's algorithm is often used in practical problems. We recall that the output of this algorithm is a sequence which converges toward the solution of the primal minimization problem. In the first part of this paper we prove that we can use the same algorithm to find solutions of a wider class of systems than the ones which characterize saddle points of lagrangians.\\

Next, we take full advantage of this remark to build approximating sequences for several MFG systems. MFG have been introduced by J.-M. Lasry and P.-L. Lions in \citep{lasry2007mean} and independently and in a particular case by M. Huang, P. Caines and R. Malhamme in \citep{huang2006large}. The theory of MFG is concerned with Nash equilibria of differential games with infinitely mainly indiscernable players, who interacts only through mean field type terms. We refer to \citep{lions2007cours} for a detailed presentation of MFG and to \citep{carmona2017probabilistic} for a complete presentation of the probabilistic theory of MFG. In general, the study of a MFG requires to solve the so-called master equation, see \citep{lions2007cours,cardaliaguet2015master}, but in the case when there is no common noise, the problem reduces to a system of Partial Differential Equations (PDE). It is well known that in the so-called potential case, MFG systems can be interpreted as the optimality conditions for an optimal control problem of a PDE, see \citep{lasry2007mean,cardaliaguet2010notes} for instance. Thus Uzawa's algorithm is a natural method we can apply to such optimal control problems. We show that, under monotonicity assumptions, we can apply an Uzawa's like algorithm to approximate solutions of MFG systems even in the non-potential case. In this paper we shall apply this algorithm to MFG systems of continuous control (i.e. as in \citep{lasry2007mean}), optimal stopping (see \citep{bertucci2017optimal}) and impulse control (see \citep{bertucci2018fokker}).\\

The last part of this paper presents the results of the implementation of Uzawa's iterations to the discretized problems of MFG of optimal stopping, impulse control and continuous control. 

\subsection*{Bibliographical comments}
We here give some details on the bibliographical context in which this article takes place. Concerning the literature regarding Uzawa's algorithm, there exist plenty of results on this well known algorithm. Although, using this algorithm to find solutions of systems of inequalities seems to be used only in the case of linear system, as in \citep{bramble1997analysis,elman1994inexact} for instance.\\

Concerning the MFG literature, the first numerical methods for MFG systems have been developed by Y. Achdou and I. Capuzzo-Dolcetta in \citep{achdou2010mean}. Several other methods have been studied and some of them involved the optimal control interpretation in the potential case. Such methods are somehow similar to the one we present here because they are also the implementation of a search for saddle points. We refer to \citep{bricen?o2018proximal,benamou2015augmented} for example. The main novelties of our work is to consider the non potential case and that we consider the cases of optimal stopping and impulse control. Furthermore, we mention the papers \citep{ferreira2015existence,almulla2017two} of R. Ferreira, D. Gomes and al. in which the first order MFG system of continuous control is interpreted as a system of variational inequalities and solve numerically. The interpretation in terms of variational inequalities of the MFG system is central in the rest of this paper.

\section{A remark on Uzawa's algorithm}
\subsection{Presentation of the standard algorithm}
We present here the classical result of convergence of Uzawa's algorithm. Although we are going to present this algorithm from the point of view of the search of a saddle point, let us recall the well-known fact that given a convex minimization problem, Uzawa's algorithm is only the projected gradient ascent method applied on the dual problem. Let us take a lagrangian $L$ defined by:
\begin{equation}\label{defL}
L(x,y) = F(x) + < a(x),b(y)> , \forall x \in K_1, \forall y \in K_2;
\end{equation}
where $K_1$ is a closed convex subset of the Hilbert space $(H_1, (\cdot,\cdot))$ and $K_2$ is a closed convex subset of the Hilbert space $\big(H_2, ((\cdot, \cdot)) \big)$. We denote by $(H_3, < \cdot, \cdot>)$ a third Hilbert space. The applications $a : H_1 \to H_3$ and $b: H_2 \to H_3$ are such that for all $y \in K_2$, $x \to < a(x), b(y)>$ is a convex application and $\tilde{K}_2 := b(K_2)$ is closed and convex. Moreover $F : H_1 \to \mathbb{R}$ is a convex function. The lagrangian $L$ is associated to the following minimization problem :
\[
\underset{x \in K_1}{\inf} \{F(x) + \underset{y \in K_2}{\sup} <a(x), b(y)>\}.
\]
We recall that a saddle point of $L$ is a couple $(x,y) \in K_1 \times K_2$ such that
\begin{equation}\label{saddlepoint}
\underset{x' \in K_1 y' \in K_2}{\inf \sup} L(x',y') = \underset{y' \in K_2 x' \in K_1}{\sup \inf} L(x',y') = L(x,y).
\end{equation}
We fix a real number $\delta > 0$ and we denote by $\bold{P}_A$ the orthogonal projection on the set $A$ in $H_3$. Uzawa's algorithm (with step $\delta$) consists in building the sequence $(x_n, y_n)_{n \in \mathbb{N}}$ as follows :
\begin{equation}\label{iteration1}
\begin{cases}
y_0 \in K_2;\\
x_n = \underset{x \in K_1}{\text{ arginf }} \{F(x) + < a(x), b(y_n)>\};\\
y_{n+1} \in b^{-1} \big(\{ \bold{P}_{\tilde{K_2}}(b(y_n) + \delta a(x_n))\} \big);
\end{cases}
\end{equation}
where we recall that $\tilde{K_2} = b(K_2)$. Before presenting a convergence result for those iterations, we introduce the following definition. An application $f$ from the Hilbert space $(H, <\cdot, \cdot>)$ into itself is said to be $\alpha$ monotone if for any $x,y \in H$, 
\[
<f(x) - f(y), x - y> \geq \alpha <x-y, x-y>.
\]
An application $0$ monotone is simply called monotone and an application $f$ is said to be strictly monotone if for any $x,y \in H$ such that $x \ne y$ the following holds 
\[
<f(x) - f(y), x - y> > 0.
\]
A classical convergence result concerning the sequence $(x_n, y_n)_{n \in \mathbb{N}}$ is the following :
\begin{Theorem}\label{classicaluzawa}
Let us assume that :
\begin{itemize}
\item The application $F$ is differentiable with differential $f$ which is $\alpha $ monotone.
\item The application $a$ is $C$ lipschitz for some constant $C >0$.
\end{itemize}
Then if $ \delta < \frac{ 2 \alpha}{C^2}$, for any $y_0 \in K_2$, the sequence $(x_n,y_n)_{n \in \mathbb{N}}$ defined by (\ref{iteration1}) is well defined and $(x_n)_{n \in \mathbb{N}}$ converges toward $x^*$ in $H_1$, where $(x^*, y^*)$ is the unique saddle point of $L$.
\end{Theorem}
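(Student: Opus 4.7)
The plan is the standard potential-function argument for projected-gradient-on-dual methods: establish that $n \mapsto \|b(y_n)-b(y^*)\|^2$ is strictly decreasing along the iterates, with a decrement controlling $\|x_n-x^*\|^2$.

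First, I would record the well-definedness of the scheme. The map $x \mapsto F(x)+\langle a(x),b(y_n)\rangle$ is $\alpha$-strongly convex on $K_1$ (sum of an $\alpha$-strongly convex function and a convex function by hypothesis), hence admits a unique minimizer, so $x_n$ is well defined; the projection $\mathbf{P}_{\tilde K_2}$ is well defined since $\tilde K_2$ is closed and convex, so the update $b(y_{n+1})=\mathbf{P}_{\tilde K_2}(b(y_n)+\delta a(x_n))$ is unambiguous (note that the iteration only really needs $b(y_{n+1})$, not $y_{n+1}$). For a saddle point $(x^*,y^*)$, standard saddle-point calculus shows that $b(y^*)$ is a fixed point of the map $z \mapsto \mathbf{P}_{\tilde K_2}(z + \delta a(x^*))$, i.e.\ $b(y^*)=\mathbf{P}_{\tilde K_2}(b(y^*)+\delta a(x^*))$, and $x^*$ minimises $F(\cdot)+\langle a(\cdot),b(y^*)\rangle$ on $K_1$.

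Second, I would derive the key monotonicity inequality
\[
\langle a(x_n)-a(x^*),\, b(y_n)-b(y^*)\rangle \leq -\alpha\, (x_n-x^*, x_n-x^*).
\]
To obtain this, I use that $F$ is $\alpha$-strongly convex (since $f$ is $\alpha$-monotone) and that $x \mapsto \langle a(x),b(y)\rangle$ is convex for every $y\in K_2$. The first-order optimality for the minimiser $x_n$ at level $y_n$ gives, for every $x\in K_1$,
\[
F(x)+\langle a(x),b(y_n)\rangle \geq F(x_n)+\langle a(x_n),b(y_n)\rangle + \tfrac{\alpha}{2}(x-x_n,x-x_n),
\]
and symmetrically at $(x^*,y^*)$. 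Evaluating the first at $x=x^*$, the second at $x=x_n$, and summing, the $F$-terms cancel and one obtains the displayed inequality.

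Third, I would combine this with the non-expansiveness of the projection and the Lipschitz bound on $a$:
\[
\begin{aligned}
\|b(y_{n+1})-b(y^*)\|^2
&\leq \|b(y_n)-b(y^*)+\delta(a(x_n)-a(x^*))\|^2 \\
&= \|b(y_n)-b(y^*)\|^2 + 2\delta\langle a(x_n)-a(x^*),b(y_n)-b(y^*)\rangle \\
&\quad + \delta^2 \|a(x_n)-a(x^*)\|^2 \\
&\leq \|b(y_n)-b(y^*)\|^2 - \delta\bigl(2\alpha-\delta C^2\bigr)(x_n-x^*,x_n-x^*).
\end{aligned}
\]
Under the hypothesis $\delta<2\alpha/C^2$ the coefficient $\delta(2\alpha-\delta C^2)$ is strictly positive, so $\|b(y_n)-b(y^*)\|^2$ is decreasing and bounded below, hence convergent, and summing the telescoping inequality yields $\sum_n (x_n-x^*,x_n-x^*)<\infty$, so $x_n\to x^*$ in $H_1$.

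The step I expect to be the main technical point is the monotonicity estimate in the second paragraph: the classical Uzawa proof exploits an affine coupling $\langle Ax,y\rangle$, but here one only has convexity in $x$ of $\langle a(x),b(y)\rangle$, and I have to be careful that the strong convexity constant $\alpha$ passes cleanly into the inner-product inequality between $a(x_n)-a(x^*)$ and $b(y_n)-b(y^*)$ without needing any monotonicity of $a$ itself. Uniqueness of the saddle point (in $x$) follows from the same monotonicity estimate, and existence can be assumed or obtained by the usual compactness/convex-analysis arguments before running the iteration.
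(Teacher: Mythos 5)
Your proof is correct and follows essentially the same route as the paper: the same Lyapunov quantity $\|b(y_n)-b(y^*)\|^2$, the same non-expansiveness-of-projection step, and the same key cross-term estimate $\langle a(x_n)-a(x^*),\,b(y_n)-b(y^*)\rangle \leq -\alpha\|x_n-x^*\|^2$ leading to the telescoping inequality under $\delta<2\alpha/C^2$. The only (immaterial) difference is that you derive the cross-term estimate from strong convexity of the objective at the two minimizers, whereas the paper obtains it by subtracting the two first-order variational inequalities and invoking the $\alpha$-monotonicity of $f$ directly.
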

We briefly recall here the proof of this result.
\begin{proof}
Given the assumptions we made, we know that there exists a unique couple $(x^*, y^*) \in H_1 \times H_2$ verifying (\ref{saddlepoint}). It satisfies 
\[
\begin{cases}
(f(x^*), x' -x^*) + < a(x') - a(x^*), b(y^*)> \geq 0, \forall x' \in K_1;\\
< a(x^*), b(y') - b(y^*)> \leq 0, \forall y' \in K_2.
\end{cases}
\]
Moreover by construction, $(x_n)_{n \in \mathbb{N}}$ satisfies for all $n \in \mathbb{N}$ :
\[
(f(x_n), x' -x_n) + < a(x') - a(x_n), b(y_n)> \geq 0, \forall x' \in K_1.
\]
Thus we deduce that 
\[
\begin{aligned}
<a(x_n) - a(x^*), b(y^*) - b(y_n)> &\geq (f(x_n) - f(x^*), x_n - x^*) \\
& \geq \alpha ||x_n - x^*||_{H_1}.
\end{aligned}
\]
Because $\bold{P}_{\tilde{K_2}}$ is a contraction, we obtain that
\[
\begin{aligned}
||b(y_{n+1}) - b(y^*)||_{H^3}^2  \leq &||b(y_n) - b(y^*) + \delta (a(x_n) - a(x^*))||_{H^3}^2\\
 \leq &||b(y_n) - b(y^*)||_{H^3}^2 + 2 \delta < b(y_n) - b(y^*), a(x_n) - a(x^*)> \\
 &+ \delta^2 ||a(x_n) - a(x^*)||_{H^3}^2.
\end{aligned}
\]
We then deduce that 
\[
||b(y_{n+1}) - b(y)||_{H^3}^2  \leq ||b(y_n) - b(y)||_{H^3}^2  - 2\delta\alpha ||x_n - x ||_{H^1}^2 + \delta^2 ||a(x_n) - a(x)||_{H^3}^2.
\]
Finally, because $\delta < \frac{2 \alpha}{C^2}$ we obtain that there exists $0 < \beta <1$ such that :
\[
\beta ||x_n - x ||_{H^1}^2 \leq ||b(y_n) - b(y)||_{H^3}^2 - ||b(y_{n+1}) - b(y)||_{H^3}^2,
\]
which concludes the proof of the result.
\end{proof}
\begin{Rem}
The use of the application $b$ and of the Hilbert space $H_2$ is somewhat artificial. We only use this formalism because it is closer to the set up needed for the applications of the next section. Moreover let us note that we do not state any convergence for the sequence $(y_n)_{n \geq 0}$.
\end{Rem}
\subsection{A generalization of Uzawa's algorithm}
We now remark that instead of using (\ref{iteration1}) to define a sequence $(x_n, y_n)_{n \in \mathbb{N}}$, we can use the following :
\begin{equation}\label{iteration2}
\begin{cases}
y_0 \in K_2;\\
x_n \text{ is defined by }  (f(x_n), x' -x_n) + < a(x') - a(x_n), b(y_n)> \geq 0, \forall x' \in K_1;\\
y_{n+1} \in b^{-1} \big(\{ \bold{P}_{\tilde{K_2}}(b(y_n) + \delta a(x_n))\} \big).
\end{cases}
\end{equation}
Let us note that if $F$ is a convex differentiable function, then (\ref{iteration1}) and (\ref{iteration2}) are equivalent, but the second one is more general in the sense that it allows us to build the sequence $(x_n, y_n)_{n \in \mathbb{N}}$ even in the case in which there is no function $F$ for which $f$ is the differential. Under the assumptions of theorem \ref{classicaluzawa}, the sequence $(x_n)_{n \in \mathbb{N}}$ converges toward $x^*$ where $(x^*,y^*)$ is the saddle point of $L$. Here we are interested in approximating the couples $(x_*, y_*) \in H_1 \times H_2$ solutions of
\begin{equation}\label{systemvi}
\begin{cases}
(f(x_*), x' -x_*) + < a(x') - a(x_*), b(y_*)> \geq 0, \forall x' \in K_1;\\
< a(x_*), b(y') - b(y_*)> \leq 0, \forall y' \in K_2;\\
x \in K_1 ; y \in K_2.
\end{cases}
\end{equation}
We establish the following result :
\begin{Theorem}\label{newiteration}
Let us take $f : H_1 \to H_1$. We assume that :
\begin{itemize}
\item The application $f$ is $\alpha$ monotone.
\item The application $a$ is $C$ lipschitz for some constant $C>0$ and differentiable.
\item There exists a couple $(x_*,y_*)$ satisfying (\ref{systemvi}).
\end{itemize}
Then, for any $y_0 \in K_2$, if $\delta < \frac{2\alpha}{C^2}$, (\ref{iteration2}) defines indeed a sequence $(x_n, y_n)_{n \in \mathbb{N}}$ and $(x_n)_{n \in \mathbb{N}}$ converges toward $x_*$ in $H_1$.
\end{Theorem}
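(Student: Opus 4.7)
My plan is to adapt the proof of Theorem \ref{classicaluzawa} to the variational-inequality setting, exploiting the observation that the classical argument never actually invokes the functional $F$; it only uses the variational characterization of $x_n$, which is now taken as the definition. The convergence analysis should therefore go through essentially verbatim, once well-posedness of the iteration is established.

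First I would address the well-posedness of (\ref{iteration2}). Given $y_n \in K_2$, uniqueness of $x_n$ is immediate: if $x_n$ and $\tilde{x}_n$ both satisfied the first-order condition, testing each condition with the other element and adding yields $-(f(x_n) - f(\tilde{x}_n), x_n - \tilde{x}_n) \geq 0$, whence $x_n = \tilde{x}_n$ by $\alpha$-monotonicity. Existence can then be obtained from standard monotone variational-inequality theory, the differentiability and Lipschitz character of $a$ providing the required continuity of the lower-order term and $\alpha$-monotonicity of $f$ giving coercivity.

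The heart of the argument is a one-step contraction estimate. Testing the first inequality of (\ref{systemvi}) with $x' = x_n$ and the variational inequality defining $x_n$ with $x' = x_*$, and adding, I would obtain
\[
\langle a(x_n) - a(x_*), b(y_*) - b(y_n)\rangle \geq (f(x_n) - f(x_*), x_n - x_*) \geq \alpha \|x_n - x_*\|_{H_1}^2.
\]
Next I would observe that the second inequality of (\ref{systemvi}) is exactly the variational characterization of the orthogonal projection in $H_3$, so that $b(y_*) = \mathbf{P}_{\tilde{K}_2}(b(y_*) + \delta a(x_*))$. Combining this with the fact that $\mathbf{P}_{\tilde{K}_2}$ is 1-Lipschitz, expanding the squared norm, and invoking $\|a(x_n) - a(x_*)\|_{H_3} \leq C \|x_n - x_*\|_{H_1}$, I arrive at
\[
\|b(y_{n+1}) - b(y_*)\|_{H_3}^2 \leq \|b(y_n) - b(y_*)\|_{H_3}^2 - \delta(2\alpha - \delta C^2)\|x_n - x_*\|_{H_1}^2.
\]
Since $\delta < 2\alpha/C^2$, the coefficient $\delta(2\alpha - \delta C^2)$ is strictly positive, and telescoping then forces $\sum_n \|x_n - x_*\|_{H_1}^2 < \infty$, so that $x_n \to x_*$ in $H_1$.

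The main obstacle I anticipate is not the contraction estimate, which is essentially a transcription of the classical argument, but rather the well-posedness of $x_n$: in Theorem \ref{classicaluzawa} this is trivial because $x_n$ is the argmin of a strongly convex functional, whereas here it must be justified as the solution of a generalized variational inequality with no underlying potential. Once this step is secured, the remainder of the proof makes clear that the role of $F$ in the classical setting was purely bookkeeping, and the extension to a merely $\alpha$-monotone $f$ costs nothing.
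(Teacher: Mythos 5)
Your proposal is correct and follows essentially the same route as the paper: uniqueness of $x_n$ from $\alpha$-monotonicity, existence from monotone variational-inequality theory (the paper makes this explicit via a projected-gradient fixed-point construction), and then the identical telescoping contraction estimate from the classical Uzawa proof. Your explicit observation that the second inequality of (\ref{systemvi}) means $b(y_*) = \mathbf{P}_{\tilde{K}_2}(b(y_*) + \delta a(x_*))$ is a point the paper leaves implicit, and it is a welcome clarification.
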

\begin{proof}
First let us remark that for any $y_0 \in K_2$, the sequence $(x_n, y_n)_{n \in \mathbb{N}}$ is well defined. Indeed the second line of (\ref{iteration2}) defines a unique element $x_n \in H_1$ for any $n \in \mathbb{N}$. This comes from the fact that for any $y \in K_2$, $\epsilon > 0$, we can define the sequence $(\xi_p)_{p \in \mathbb{N}}$ by :
\[
\begin{cases}
\xi_0 \in H_1;
\xi_{p+1} = \bold{P}_{K_1}\big(\xi_p - \epsilon \big( f(\xi_p) + (Da(\xi_p))^*(b(y)) \big) \big);
\end{cases}
\]
where $\bold{P}_{K_1}$ stands for the orthogonal projection (for $H_1$) onto $K_1$. If $\epsilon$ is small enough, the sequence $(\xi_p)_{p \in \mathbb{N}}$ is a Cauchy sequence whose limit $\xi_*$ satisfies 
\[
(f(\xi_*), x' -\xi_*) + < a(x') - a(\xi_*), b(y)> \geq 0, \forall x' \in K_1.
\]
Such an element is unique because of the $\alpha$ monotonicity of $f$.\\

The rest of the proof follows the same argument as in the proof of theorem \ref{classicaluzawa}.
\end{proof}
\begin{Rem}
The existence of a couple $(x_*,y_*)$ satisfying (\ref{systemvi}) can be obtained directly under some assumptions on the applications $a$ and $b$ via a Kakutani's type fixed point theorem.
\end{Rem}

\section{Application of Uzawa's iterations to mean field games}
We now present how we can use the previous results to approximate some MFG systems. We shall apply this remark on Uzawa's algorithm to three different MFG systems. The first one is a system modeling a MFG of optimal stopping as in \citep{bertucci2017optimal}. The second one is a MFG system modeling an impulse control problem, following \citep{bertucci2018fokker} and we finish with the classical case of continuous control as in \citep{lasry2007mean}. To simplify notations, we present the following results in a stationary setting in which the state space is the $d$ dimensional torus $\mathbb{T}^d$.

\subsection{The case of optimal stopping}
We are here interested in approximating the solution of the following system of unknown $(u,m)$ :
\begin{equation}\label{optstopvi}
\begin{cases}
\forall v \in H^1(\mathbb{T}^d), v \leq 0 :\\
(f(m) + \nu \Delta u - \lambda u, v -u)_{H^{-1}\times H^1} \geq 0;\\
(-\nu \Delta m + \lambda m - \rho, v - u)_{H^{-1}\times H^1} \leq 0 ;\\
\int_{\mathbb{T}^d}(f(m) + \nu \Delta u - \lambda u)m = 0;\\
u \leq 0 ; m \geq 0;
\end{cases}
\end{equation}
where $f$ is a continuous application from $L^2(\mathbb{T}^d)$ into itself, $\nu, \lambda >0$ are two parameters of the model and $\rho \in H^{-1}(\mathbb{T}^d)$ is the entry rate of the players. The exit cost of the MFG is here $0$. The first variational inequality of this system arises from the obstacle problem satisfied by the value function $u$ of a generic player. The second variational inequality and the integral relation arise from the "Fokker-Planck equation" satisfied by the density of players $m$. Let us remark that we have here abused the name variational inequality as we only refer to a variational formulation which is an inequality and not to the famous concept introduced in \citep{lions1967variational} by Lions and Stampacchia. This system models Nash equilibria in mixed strategies of a MFG of optimal stopping, we refer to \citep{bertucci2017optimal} for more details on this system. From \citep{bertucci2017optimal} we know that there exists a unique solution $(u,m) \in H^2(\mathbb{T}^d) \times H^1(\mathbb{T}^d)$ of (\ref{optstopvi}) under the assumption that $f$ is strictly monotone, i. e. that it satisfies for all $m,m' \in L^2(\mathbb{T}^d)$:
\[
\int_{\mathbb{T}^d}(f(m) - f(m'))(m- m') > 0 \text{ if } m \ne m'.
\]
Let us remark that $(u,m)$ also satisfies 
\begin{equation}\label{stopgoodvi}
\begin{cases}
 \forall µ \in L^2(\mathbb{T}^d), µ \geq 0:\\
\int_{\mathbb{T}^d} (f(m) + \nu \Delta u - \lambda u) (µ - m) \geq 0;\\
\forall v \in H^1(\mathbb{T}^d), v \leq 0 :\\
(-\nu \Delta m + \lambda m - \rho, v - u)_{H^{-1} \times H^1} \geq 0.
\end{cases}
\end{equation}
In the case when $f$ is strictly monotone, (\ref{stopgoodvi}) has a unique solution $(u,m) \in H^2(\mathbb{T}^d) \times H^1(\mathbb{T}^d)$ which is the unique solution of (\ref{optstopvi}). The system (\ref{stopgoodvi}) falls under the scope of application of the previous section. Thus we define for $\delta > 0$ the following Uzawa's iterations :
\begin{equation}\label{iterationstop}
\begin{cases}
u_0 \in H^2(\mathbb{T}^d), u_0 \leq 0.\\
m_n \in L^2(\mathbb{T}^d) \text{ defined by : } \\
m_n \geq 0; \forall µ \in L^2(\mathbb{T}^d), µ \geq 0 : \int_{\mathbb{T}^d}(f(m_n) + \nu \Delta u_n - \lambda u_n)(µ - m_n) \geq 0.\\
u_{n+1} \in H^2(\mathbb{T}^d) \text{ defined by : } L u_{n+1}  = \bold{P}_{\tilde{K}} \big( L u_{n} - \delta(m_n - L^{-1}\rho) \big).
\end{cases}
\end{equation}
where $L$ is the linear operator $$L = -\nu \Delta + \lambda Id,$$ the closed convex set $\tilde{K}$ is defined by $$\tilde{K} := \{ g \in L^2(\mathbb{T}^d), L^{-1} g \leq 0\}$$ and $\bold{P}_A$ stands for the orthogonal projection in $L^2(\mathbb{T}^d)$ onto the set $A$. Let us note that from classical results on variational inequalities (see \citep{lions1967variational} for instance), $(u_n)_{n \geq 0}$ is a well defined sequence of $H^2(\mathbb{T}^d)$ because for all $n \in \mathbb{N}$, $m_n - L^{-1}\rho \in L^2(\mathbb{T}^d)$. Recalling the results of the previous section, $(m_n)_{n \geq 0}$ is well defined under some monotonicity assumptions on $f$. We have the following result :
\begin{Theorem}\label{convstop}
Assume that $f$ is $\alpha$ monotone from $L^2(\mathbb{T}^d)$ into itself for some $\alpha > 0$ and that $\delta < 2 \alpha$, then for any $u_0 \in H^2(\mathbb{T}^d)$, the sequence $(m_n)_{n \geq 0}$ defined by (\ref{iterationstop}) converges toward $m$ in $L^2(\mathbb{T}^d)$, where $(u,m)$ is the only solution of (\ref{optstopvi}).
\end{Theorem}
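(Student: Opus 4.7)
The plan is to reduce the statement to a direct application of Theorem \ref{newiteration} by fixing an appropriate dictionary between the abstract data $(H_1,H_2,H_3,K_1,K_2,f,a,b)$ and the ingredients of the MFG system. I would set $H_1 = H_3 = L^2(\mathbb{T}^d)$, $H_2 = H^2(\mathbb{T}^d)$, $K_1 = \{m \in L^2(\mathbb{T}^d) : m \geq 0\}$, $K_2 = \{u \in H^2(\mathbb{T}^d) : u \leq 0\}$, $b(u) = Lu$ and $a(m) = -m + L^{-1}\rho$. Since $L : H^2(\mathbb{T}^d) \to L^2(\mathbb{T}^d)$ is a continuous linear isomorphism, $\tilde{K}_2 := b(K_2)$ is exactly the set $\tilde{K}$ used in (\ref{iterationstop}) and is closed and convex in $L^2(\mathbb{T}^d)$. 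Moreover $a$ is affine, $1$-Lipschitz, and differentiable with $Da = -\mathrm{Id}$, so the hypothesis on $a$ in Theorem \ref{newiteration} holds with $C = 1$; the $\alpha$-monotonicity of $f$ is assumed.

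Next I would check that, with this choice, (\ref{systemvi}) is precisely (\ref{stopgoodvi}) and (\ref{iteration2}) is precisely (\ref{iterationstop}). Using self-adjointness of $L$ one has $\langle a(\mu) - a(m), Lu\rangle_{L^2} = (Lu, m - \mu)_{L^2}$, so the first line of (\ref{systemvi}) reads $(f(m) - Lu, \mu - m)_{L^2} \geq 0$ for $\mu \geq 0$, matching the first variational inequality of (\ref{stopgoodvi}). The second line of (\ref{systemvi}) unwraps, again by self-adjointness of $L$, to $\langle Lm - \rho, v - u\rangle_{H^{-1}\times H^1} \geq 0$ for $v \in H^2(\mathbb{T}^d)$ with $v \leq 0$; to obtain the $H^1$ formulation of (\ref{stopgoodvi}) one approximates an arbitrary non-positive $H^1$ test function by non-positive $H^2$ functions (for instance by convolution with a standard mollifier on $\mathbb{T}^d$). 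On the iteration side, the update $b(u_{n+1}) = \bold{P}_{\tilde{K}_2}(b(u_n) + \delta a(m_n))$ expands to $Lu_{n+1} = \bold{P}_{\tilde{K}}(Lu_n - \delta(m_n - L^{-1}\rho))$, which is exactly the third line of (\ref{iterationstop}); similarly the second line of (\ref{iteration2}) translates into the variational inequality defining $m_n$ in (\ref{iterationstop}).

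The existence hypothesis in Theorem \ref{newiteration} is then provided by the existence result from \citep{bertucci2017optimal} cited in the excerpt: under strict monotonicity of $f$ (which follows from $\alpha$-monotonicity with $\alpha > 0$) there is a unique solution $(u,m) \in H^2(\mathbb{T}^d) \times H^1(\mathbb{T}^d)$ of (\ref{optstopvi}), and this solution also solves (\ref{stopgoodvi}). The step-size condition $\delta < 2\alpha/C^2$ of Theorem \ref{newiteration} becomes exactly $\delta < 2\alpha$ since $C = 1$. Applying that theorem yields that $(m_n)_{n \geq 0}$ is well defined and converges in $L^2(\mathbb{T}^d)$ to the $m$-component of the unique solution of (\ref{stopgoodvi}), i.e.\ of (\ref{optstopvi}).

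The only non-mechanical point I expect is the identification between the $H^2$-valued formulation dictated by the choice $H_2 = H^2(\mathbb{T}^d)$ and the $H^1$-valued formulation used in (\ref{optstopvi})--(\ref{stopgoodvi}); this is handled by the density argument sketched above together with the continuity of $L^{-1}$. Everything else is a straightforward translation: checking the Lipschitz constant of $a$, the closedness and convexity of $\tilde{K}_2$, and the matching of the two iterations.
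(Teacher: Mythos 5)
Your proposal is correct and follows exactly the route the paper intends: the paper's proof of Theorem \ref{convstop} is the single sentence ``this result is a direct application of theorem \ref{newiteration}'', and your dictionary ($H_1=H_3=L^2$, $H_2=H^2$, $b=L$, $a(m)=-m+L^{-1}\rho$, $C=1$, hence $\delta<2\alpha$) together with the verification that (\ref{systemvi}) becomes (\ref{stopgoodvi}) and (\ref{iteration2}) becomes (\ref{iterationstop}) is precisely the translation the paper leaves implicit. The only detail you add beyond the paper is the mollification step reconciling the $H^1$ and $H^2$ test-function classes, which is a reasonable and correct piece of bookkeeping.
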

\begin{proof}
This result is a direct application of theorem \ref{newiteration}.
\end{proof}
Let us remark that the projection involved in (\ref{iterationstop}) is similar to the resolution of a bi-laplacian obstacle problem. Indeed, given $u_n, m_n$, we are looking for $u_{n+1}$ such that :
\[
\begin{aligned}
&\forall v \in H^2(\mathbb{T}^d), v \leq 0 :\\
& \int_{\mathbb{T}^d} (Lu_{n+1} - Lv)(Lu_{n+1} + \delta (m_n - L^{-1}\rho) - Lu_n) \leq 0.
\end{aligned}
\]
Let us also make a remark on the potential case. The potential case is the case when there exists $\mathcal{F} : L^2(\mathbb{T}^d) \to \mathbb{R}$ such that for every $m,m' \in L^2(\mathbb{T}^d)$ :
\begin{equation}\label{potential}
\underset{t \to 0}{\lim} \frac{\mathcal{F}(m + tm') - \mathcal{F}(m)}{t} = \int_{\mathbb{T}^d}f(m)(m' - m).
\end{equation}
In such a situation, if $f$ is strictly monotone, following the result of \citep{bertucci2017optimal}, the unique solution $(u,m)$ of (\ref{optstopvi}) is also the saddle point of the lagrangian $\mathcal{L}$ defined on $\{ µ \in L^2(\mathbb{T}^d), µ \geq 0\} \times \{v \in H^2(\mathbb{T}^d), v \leq 0\}$ by
\[
\mathcal{L}(µ,v) = \mathcal{F}(µ) + \int_{\mathbb{T}^d} (-\nu \Delta v + \lambda v)m - \int_{\mathbb{T}^d}v \rho.
\]
The iterations (\ref{iterationstop}) are then the result of the classical Uzawa's algorithm on $\mathcal{L}$.

\subsection{The case of impulse control}
In this section we are interested in building approximations of solutions of the following system :
\begin{equation}\label{systemimp}
\begin{cases}
\forall v \in H^1(\mathbb{T}^d), v \leq Mv :\\
(f(m) + \nu \Delta u - \lambda u, v -u)_{H^{-1}\times H^1} \geq 0;\\
(-\nu \Delta m + \lambda m - \rho, v - u)_{H^{-1}\times H^1} \geq 0 ;\\
\int_{\mathbb{T}^d}(f(m) + \nu \Delta u - \lambda u)m = 0;\\
u \leq Mu ; m \geq 0;
\end{cases}
\end{equation}
where $f$ is a continuous application from $L^2(\mathbb{T}^d)$ into itself, bounded uniformly from below on the positive elements of $L^2(\mathbb{T}^d)$, $\nu, \lambda > 0$ are two parameters of the model, $\rho \in H^{-1}(\mathbb{T}^d)$ is the entry rate of players and $M$ is the operator defined by $$Mv(x) := \underset{\xi \in J}{\inf} \{k(x, \xi) + v(x + \xi)\}$$ where $J$ is a finite set of $\mathbb{T}^d$ and $k$ is a smooth non-negative function.

The system (\ref{systemimp}) models Nash equilibria of MFG of impulse control in which the players face the running cost $f(m)$ and have to pay $k(x,\xi)$ if they are in $x$ to jump $\xi$ further. The density of players is $m$ and $u$ represents the value function of a generic player. We refer to \citep{bertucci2018fokker} for more details on this problem and for the following result. If $f$ is strictly monotone and $k$ satisfies $$\begin{cases} x \to \underset{\xi \in J}{\inf} \{k(x, \xi)\} \in W^{2, \infty};\\\exists k_0>0, \forall x \in \mathbb{T}^d, \xi \in J: k(x,\xi) \geq k_0 \end{cases}$$ then there exists a unique solution $(u,m) \in H^2(\mathbb{T}^d) \times H^1(\mathbb{T}^d)$ of (\ref{systemimp}), moreover, this couple $(u,m)$ satisfies :
\begin{equation}\label{impgoodvi}
\begin{cases}
 \forall µ \in L^2(\mathbb{T}^d), µ \geq 0:\\
\int_{\mathbb{T}^d} (f(m) + \nu \Delta u - \lambda u) (µ - m) \geq 0;\\
\forall v \in H^1(\mathbb{T}^d), v \leq Mv :\\
(-\nu \Delta m + \lambda m - \rho, v - u)_{H^{-1} \times H^1} \geq 0.
\end{cases}
\end{equation}
Thus we define, as in the case of optimal stopping, the following Uzawa's iterations for $\delta > 0$ :
\begin{equation}\label{iterationimp}
\begin{cases}
u_0 \in H^2(\mathbb{T}^d), u_0 \leq Mu_0.\\
m_n \in L^2(\mathbb{T}^d) \text{ defined by : } \\
m_n \geq 0; \forall µ \in L^2(\mathbb{T}^d), µ \geq 0 : \int_{\mathbb{T}^d}(f(m_n) + \nu \Delta u_n - \lambda u_n)(µ - m_n) \geq 0.\\
u_{n+1} \in H^2(\mathbb{T}^d) \text{ defined by : } L u_{n+1}  = \bold{P}_{K'} \big( L u_{n} - \delta(m_n - L^{-1}\rho) \big);
\end{cases}
\end{equation}
where $L$ still denotes the linear operator $$L = -\nu \Delta + \lambda Id,$$ the closed convex set $K'$ is defined by $$K' := \{ g \in L^2(\mathbb{T}^d), L^{-1}g \leq M (L^{-1}g)\}$$ and $\bold{P}_A$ stands for the orthogonal projection onto $A$ in $L^2(\mathbb{T}^d)$. We have the following result of convergence :
\begin{Theorem}\label{convimp}
Assume that $f$ is $\alpha$ monotone from $L^2(\mathbb{T}^d)$ into itself for some $\alpha > 0$ and that $\delta < 2 \alpha$, then for any $u_0 \in H^2(\mathbb{T}^d)$, the sequence $(m_n)_{n \geq 0}$ defined by (\ref{iterationstop}) converges toward $m$ in $L^2(\mathbb{T}^d)$, where $(u,m)$ is the only solution of (\ref{optstopvi}).
\end{Theorem}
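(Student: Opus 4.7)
The plan is to reduce the variational system (\ref{impgoodvi}) to the abstract system (\ref{systemvi}) and then invoke Theorem \ref{newiteration} directly, exactly mirroring the proof of Theorem \ref{convstop}. Concretely, I would set $H_1 = H_3 = L^2(\mathbb{T}^d)$ with unknown $x = m$ and $K_1 = \{m \in L^2(\mathbb{T}^d) : m \geq 0\}$, take $H_2$ to be an appropriate Hilbert space containing $K_2 = \{v \in H^1(\mathbb{T}^d) : v \leq Mv\}$, and choose $a(m) = m - L^{-1}\rho$ together with $b(u) = -Lu$. A short computation then shows that the first line of (\ref{impgoodvi}) rewrites as $(f(x_*), x'-x_*) + \langle a(x')-a(x_*), b(y_*)\rangle \geq 0$ and the second as $\langle a(x_*), b(y')-b(y_*)\rangle \leq 0$. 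One also checks, using the elementary identity $\mathbf{P}_{-A}(-z) = -\mathbf{P}_A(z)$ for a closed convex set $A$, that $\tilde{K}_2 = b(K_2) = -K'$ and that the abstract iteration (\ref{iteration2}) is precisely the concrete iteration (\ref{iterationimp}).

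Next I would verify the three hypotheses of Theorem \ref{newiteration}. The $\alpha$-monotonicity of $f$ is granted. Since $a$ is affine with linear part equal to the identity on $L^2(\mathbb{T}^d)$, it is differentiable and $C$-Lipschitz with $C = 1$, so the condition $\delta < 2\alpha/C^2 = 2\alpha$ is exactly the hypothesis of the theorem. Existence of a couple $(x_*, y_*)$ solving (\ref{systemvi}) corresponds to existence of a solution $(m,u)$ of (\ref{impgoodvi}), which follows from the existence result for (\ref{systemimp}) recalled from \citep{bertucci2018fokker} together with the fact noted in the text that any solution of (\ref{systemimp}) also satisfies (\ref{impgoodvi}).

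The one verification that is not purely cosmetic is that $\tilde{K}_2 = -K'$ (equivalently $K'$) is closed and convex in $L^2(\mathbb{T}^d)$. Convexity is the key point: the operator $v \mapsto Mv = \inf_{\xi \in J}\{k(\cdot,\xi) + v(\cdot+\xi)\}$ is concave in $v$ as an infimum of affine maps, so $v \mapsto v - Mv$ is convex, which shows $K_2$ is convex, and then $K' = L(K_2)$ is convex by linearity of $L$. Closedness of $K'$ in $L^2$ follows from the $L^2 \to H^2$ regularization by $L^{-1}$ combined with the continuity of $M$ under the standing assumption that $x \mapsto \inf_\xi k(x,\xi) \in W^{2,\infty}$ (so that $M$ preserves the relevant Sobolev regularity and the inequality $L^{-1}g \leq M(L^{-1}g)$ passes to $L^2$-limits).

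I expect the main obstacle (to the extent there is one) to lie in this closedness of $K'$ and in making the duality pairing between the $H^{-1} \times H^1$ formulation of the second line of (\ref{impgoodvi}) compatible with the $L^2 = H_3$ scalar product used in the abstract Theorem \ref{newiteration}; the self-adjointness of $L$ and the identification $H_3 = L^2(\mathbb{T}^d)$ handle this once one rewrites $(-\nu\Delta m + \lambda m - \rho, v - u)_{H^{-1}\times H^1}$ as $(m - L^{-1}\rho, Lv - Lu)_{L^2}$. Once the dictionary and these verifications are in place, the convergence of $(m_n)_{n \geq 0}$ to $m$ in $L^2(\mathbb{T}^d)$ is a direct consequence of Theorem \ref{newiteration}, and the argument is a verbatim copy of the one used for Theorem \ref{convstop}.
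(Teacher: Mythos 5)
Your proposal is correct and takes essentially the same route as the paper, whose entire proof is the single sentence that the result is a direct application of Theorem \ref{newiteration}. The dictionary you supply ($x=m$, $a(m)=m-L^{-1}\rho$, $b(u)=-Lu$, Lipschitz constant $C=1$ giving $\delta<2\alpha$, and convexity/closedness of $K'$ via concavity of $M$ as an infimum of affine maps) is exactly the set of verifications the paper leaves implicit.
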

\begin{proof}
This result is once again a direct application of theorem \ref{newiteration}.
\end{proof}
Let us remark that although the Hamilton-Jacobi-Bellmann equation in (\ref{systemimp}) is a quasi-variational inequality, the equation we have to solve at each iteration in (\ref{iterationimp}) to update the lagrange multiplier $u_n$ is a variational inequality, which is in principle easier to solve than a quasi-variational inequality.\\

Moreover, in the potential case, i. e. when there exists $\mathcal{F}$ satisfying (\ref{potential}), if $f$ is strictly monotone, the solution $(u,m)$ of (\ref{systemimp}) is the saddle point of $\mathcal{L}$ defined on $\{m \in L^2(\mathbb{T}^d), m \geq 0\} \times \{u \in H^2(\mathbb{T}^d), u \leq Mu\}$ by :
\[
\mathcal{L}(µ,v) = \mathcal{F}(µ) + \int_{\mathbb{T}^d} (-\nu \Delta v + \lambda v)m + \int_{\mathbb{T}^d}v \rho.
\]
This results can be found in \citep{bertucci2018fokker}. The iterations (\ref{iterationimp}) are then the ones from the use of the classical Uzawa's algorithm on $\mathcal{L}$.

\subsection{The case of continuous control}
We end this list of applications of Uzawa's iterations with the construction of approximating sequences for the following MFG system :
\begin{equation}\label{systemMFG}
\begin{cases}
-\nu \Delta u + \lambda u + H(x, \nabla u) = f(m) \text{ in } \mathbb{T}^d;\\
-\nu \Delta m + \lambda m - \text{div}(D_pH(x, \nabla u)m) = \rho \text{ in } \mathbb{T}^d;
\end{cases}
\end{equation}
where $f$ is the running cost of the players and the hamiltonian $H(x,p)$ is assumed to be convex in its second variable and uniformly lispchitz. We refer the reader to \citep{lasry2007mean, lions2007cours} for a full presentation and results on the system (\ref{systemMFG}). If $(u,m) \in H^2(\mathbb{T}^d) \times L^2(\mathbb{T}^d)$ is a solution of (\ref{systemMFG}) (with $m$ being a weak solution of the Fokker-Planck equation), then it is also a solution of :
\begin{equation}\label{vicontinuous}
\begin{cases}
\forall µ \in L^2(\mathbb{T}^d), µ \geq 0 :\\
\int_{\mathbb{T}^d} (f(m) + \nu \Delta u - \lambda u - H(x, \nabla u))(µ - m) \geq 0;\\
\forall v \in H^2(\mathbb{T}^d) :\\
\int_{\mathbb{T}^d} (- \nu \Delta (v - u) + \lambda (v -u) + D_pH(x, \nabla u)\cdot \nabla (v -u))m - \int_{\mathbb{T}^d} \rho (v- u) \geq 0.
\end{cases}
\end{equation}
Under the assumption that $f$ is strictly monotone, there exists at most one solution $(u,m)\in H^2(\mathbb{T}^d)\times L^2(\mathbb{T}^d)$ of (\ref{vicontinuous}). Although this system does not allow a direct application of theorem \ref{newiteration}, the convexity of the hamiltonian allows us to prove a result of convergence for Uzawa's like iterations. Given a sequence of non-negative real numbers $(\delta_n)_{n \geq 0}$, we define Uzawa's iteration in this case by :
\begin{equation}\label{iterationcontinuous}
\begin{cases}
u_0 \in H^2(\mathbb{T}^d);\\
m_n \text{ is defined by } \forall µ \in L^2(\mathbb{T}^d), µ \geq 0 :\\
\int_{\mathbb{T}^d}(f(m_n) + \nu \Delta u_n - \lambda u_n - H(x, \nabla u_n))(µ - m_n) \geq 0;\\
u_{n+1} \text{ is defined by } :\\
-\nu \Delta u_{n+1} + \lambda u_{n +1} + H(x, \nabla u_{n+1}) = -\nu \Delta u_{n} + \lambda u_{n} + H(x, \nabla u_{n}) - \delta_n\big(m_n - L_{u_n}^{*-1} (\rho)\big);
\end{cases}
\end{equation}
where for all $v \in H^1(\mathbb{T}^d)$, $L_v$ is the operator defined by :$$L_v w = -\nu \Delta w + \lambda w - D_pH(x, \nabla v)\cdot \nabla w.$$ We now establish the following result :
\begin{Theorem}\label{convcontinuous}
Assume that there exists a solution $(u,m) \in H^2(\mathbb{T}^d)\times L^2(\mathbb{T}^d)$ of (\ref{systemMFG}) and that $f$ is $\alpha$ monotone. Then there exists a sequence of non-negative real number $(\delta_n)_{n \geq 0}$ such that the iterations $(u_n, m_n)_{n \geq 0}$ defined by (\ref{iterationcontinuous}) are such that $(m_n)_{n \geq 0}$ converges toward $m$ in $L^2(\mathbb{T}^d)$.
\end{Theorem}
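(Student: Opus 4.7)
The plan is to adapt the proof of Theorem \ref{newiteration}, the crucial new ingredient being that the convexity of $H(x,\cdot)$ will play the role formerly played by the contractivity of the orthogonal projection. Introducing the ``dual variable'' $\Phi_n := -\nu \Delta u_n + \lambda u_n + H(x, \nabla u_n)$ (and $\Phi$ analogously for the limit), the update rule (\ref{iterationcontinuous}) becomes $\Phi_{n+1} - \Phi = (\Phi_n - \Phi) - \delta_n(m_n - \tilde m_n)$, where $\tilde m_n := L_{u_n}^{*-1}\rho$ is the non-negative solution of the linearized Fokker-Planck equation attached to $u_n$. Well-posedness of the iteration follows from standard theory for semilinear HJB equations, together with Theorem \ref{newiteration} applied to the variational inequality defining $m_n$.

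The $\alpha$-monotonicity of $f$ yields the primal estimate: plugging $\mu = m$ into the VI defining $m_n$, plugging $\mu = m_n$ into the first VI of (\ref{vicontinuous}), and summing, one obtains
\[
\alpha\|m_n - m\|_{L^2}^2 \leq \int_{\mathbb{T}^d}(f(m_n) - f(m))(m_n - m) \leq \int_{\mathbb{T}^d}(\Phi_n - \Phi)(m_n - m).
\]

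The central new ingredient is the dual estimate, which hinges on the convexity of $H$ in its second variable. Testing the Fokker-Planck equations satisfied by $m$ and $\tilde m_n$ against $u_n - u$ and subtracting yields, after integration by parts,
\[
\int_{\mathbb{T}^d}(\Phi_n - \Phi)(m - \tilde m_n) = \int_{\mathbb{T}^d} m\,\bigl[H(x, \nabla u_n) - H(x, \nabla u) - D_pH(x, \nabla u)\cdot\nabla(u_n - u)\bigr] + \int_{\mathbb{T}^d}\tilde m_n\,\bigl[H(x, \nabla u) - H(x, \nabla u_n) - D_pH(x, \nabla u_n)\cdot\nabla(u - u_n)\bigr].
\]
Both brackets are non-negative Bregman divergences of $H(x,\cdot)$, and $m, \tilde m_n \geq 0$, so the right-hand side is non-negative. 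Combined with the primal estimate, this gives
\[
\int_{\mathbb{T}^d}(\Phi_n - \Phi)(m_n - \tilde m_n) \geq \alpha\|m_n - m\|_{L^2}^2.
\]

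Squaring the dual update identity,
\[
\|\Phi_{n+1} - \Phi\|_{L^2}^2 \leq \|\Phi_n - \Phi\|_{L^2}^2 - 2\delta_n\alpha\|m_n - m\|_{L^2}^2 + \delta_n^2\|m_n - \tilde m_n\|_{L^2}^2.
\]
The main obstacle is that, in contrast with Theorem \ref{newiteration}, the quantity $\|m_n - \tilde m_n\|$ is not uniformly Lipschitz-controlled by $\|m_n - m\|$, so a single constant step size is not available a priori. I would therefore choose $\delta_n$ adaptively, for instance $\delta_n := \min\bigl(\delta_0,\ \alpha\|m_n - m\|_{L^2}^2/\|m_n - \tilde m_n\|_{L^2}^2\bigr)$ when the denominator is non-zero and $\delta_n := 0$ otherwise. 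This guarantees the monotone decrease of $\|\Phi_n - \Phi\|_{L^2}$ and the summability $\sum_n \delta_n\alpha\|m_n - m\|_{L^2}^2 < \infty$. The final step is to combine this summability with a priori bounds on $(u_n)$ (coming from the boundedness of $(\Phi_n)$ and the regularity of the HJB and Fokker-Planck problems) to deduce $m_n \to m$ in $L^2(\mathbb{T}^d)$.
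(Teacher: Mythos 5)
Your proposal is correct and follows essentially the same route as the paper: the same dual variable $HJB(u_n)$, the same splitting of $\int (HJB(u_n)-HJB(u))(m_n - L_{u_n}^{*-1}\rho)$ into a primal monotonicity term and a term controlled by the convexity of $H$ (your Bregman-divergence identity is exactly the paper's computation), and the same adaptive choice $\delta_n \sim \alpha\|m_n-m\|^2/\|m_n - L_{u_n}^{*-1}\rho\|^2$. The only cosmetic differences are the cap $\min(\delta_0,\cdot)$ on the step size and the packaging of the convexity argument, neither of which changes the substance.
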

\begin{proof}
We denote by $(u,m)\in H^2(\mathbb{T}^d)\times L^2(\mathbb{T}^d)$ the unique solution of (\ref{vicontinuous}). We take a sequence $(\delta_n)_{n \geq 0}$, $\delta_n > 0$ for all $n \geq 0$ and we consider the iterations $(u_n,m_n)_{n \geq 0}$ given by (\ref{iterationcontinuous}) for a fixed $u_0\in H^2(\mathbb{T}^d)$. We introduce the notation $$HJB(v) := -\nu \Delta v + \lambda v + H(x, \nabla v).$$ Let us remark that for all $n \geq 0$:
\[
\begin{aligned}
HJB(u_{n+1}) - HJB(u) = HJB(u_{n}) - HJB(u) - \delta_n(m_n  -L_{u_n}^{*-1} \rho).
 \end{aligned}
\] 
Thus we obtain that :
\begin{equation}\label{proofcontinuous}
\begin{aligned}
||HJB(u_{n+1}) - HJB(u)||_{L^2}^2 = &||HJB(u_{n}) - HJB(u)||_{L^2}^2 + \delta_n^2 ||m_n - L_{u_n}^{*-1}\rho||^2_{L^2}\\
 &- 2 \delta_n \int_{\mathbb{T}^d} (HJB(u_{n}) - HJB(u))(m_n  -L_{u_n}^{*-1} \rho).
\end{aligned}
\end{equation}
We now make some calculations around the third term of the right hand side of the previous equality. We compute :
\[
\begin{aligned}
\int_{\mathbb{T}^d} (HJB(u_{n}) - HJB(u))(m_n  -L_{u_n}^{*-1} \rho) =&  \int_{\mathbb{T}^d} (HJB(u_n) - HJB(u))(m_n  -m) \\
 &+ \int_{\mathbb{T}^d} \big( HJB( u_n) - HJB( u) \big)(m-L_{u_n}^{*-1}\rho ).
 \end{aligned}
 \]
Because $m$ is the solution of $$L_u^* m = \rho,$$ we deduce from the convexity of $H$ :
\[
\int_{\mathbb{T}^d} \big( HJB( u_n) - HJB( u) \big)m \geq \int_{\mathbb{T}^d} \rho (u_n - u).
\]
On the other hand :
\[
\begin{aligned}
\int_{\mathbb{T}^d} \big( HJB( u) - HJB( u_n) \big)L_{u_n}^{*-1}\rho =  & \int_{\mathbb{T}^d} L_{u_n}(u - u_n)(L_{u_n}^{*-1} \rho)\\
& - \int_{\mathbb{T}^d} D_pH(x, \nabla u_n)\cdot \nabla( u -u_n)( L_{u_n}^{* -1} \rho)\\
& + \int_{\mathbb{T}^d} \big( H(x, \nabla u) - H(x, \nabla u_n) \big) (L_{u_n}^{*-1}\rho)\\
\end{aligned}
\]
By the maximum principle, $L_{u_n}^{*-1}\rho \geq 0$, thus we deduce from the convexity of the hamiltonian that :
\[
\int_{\mathbb{T}^d} \big( HJB( u) - HJB( u_n) \big)L_{u_n}^{*-1}\rho \geq \int_{\mathbb{T}^d} \rho (u - u_n).
\]
This inequality, together with the previous one implies that :
\[
\int_{\mathbb{T}^d} (HJB(u_{n}) - HJB(u))(m_n  -L_{u_n}^{-1} \rho) \geq  \int_{\mathbb{T}^d} (HJB(u_n) - HJB(u))(m_n  -m).
\]
Using the equation satisfied by $u$ and the definition of $m_n$, we obtain that :
\[
\int_{\mathbb{T}^d} (HJB(u_n) - HJB(u))(m_n  -m) \geq \int_{\mathbb{T}^d}\big( f(m_n) - f(m) \big) (m_n - m).
\]
The $\alpha$ convexity of $f$ yields finally :
\[
\int_{\mathbb{T}^d} (HJB(u_{n}) - HJB(u))(m_n  -L_{u_n}^{-1} \rho) \geq \alpha ||m_n - m||_{L^2}^2.
\]
Using this inequality in (\ref{proofcontinuous}) we obtain that:
\[
\begin{aligned}
\delta_n\big( 2 \alpha||m_n - m||_{L^2}^2 - \delta_n ||m_n - L_{u_n}^{*-1} \rho||_{L^2}^2 \big) \leq & ||HJB(u_{n}) - HJB(u)||_{L^2}^2\\
& - ||HJB(u_{n+1}) - HJB(u)||_{L^2}^2.
\end{aligned}
\]
We assume in a first time that for all $n \in \mathbb{N}$ :
\begin{equation}\label{hypproof}
\begin{cases}
||m_n - m||_{L^2} > 0;\\
||m_n - L_{u_n}^{*-1} \rho||_{L^2} > 0.
\end{cases}
\end{equation}
Then we define for all $n \in \mathbb{N}$ $$\delta_n = \frac{\alpha ||m_n - m||_{L^2}^2}{||m_n - L_{u_n}^{*-1}\rho||_{L^2}^2}.$$
Let us observe that in this situation the sequence $(||HJB(u_n) - HJB(u)||_{L^2})_{n \geq 0}$ is decreasing and thus it has a limit and $(u_n)_{n \geq 0}$ is a bounded sequence of $H^2(\mathbb{T}^d)$. We also remark that we deduce from the convergence of $(||HJB(u_n) - HJB(u)||_{L^2})_{n \geq 0}$ that 
\[
\frac{\alpha ||m_n - m||_{L^2}^2}{||m_n - L_{u_n}^{*-1}\rho||_{L^2}} \underset{n \to 0}{\longrightarrow} 0.
\]
Because $(||HJB(u_n)||_{L^2})_{n \geq 0}$ is bounded and $f$ is $\alpha$ monotone, we deduce that $(m_n)_{n \geq 0}$ is bounded in $L^2$ and thus that :
\[
||m_n - m||_{L^2} \underset{n \to 0}{\longrightarrow} 0.
\]
To complete the proof of the theorem, let us remark that if (\ref{hypproof}) is not satisfied for $n^* \in \mathbb{N}$, then $m_{n^*} = m$ and the convergence is also proved.
\end{proof}
\begin{Rem}
Let us remark that because of the $\alpha$ monotonicity of $f$, there are obvious estimates in $L^2(\mathbb{T}^d)$ for $(m_n)_{n \geq 0}$, thus the sequence $(\delta_n)_{n \geq 0}$ can be chosen to be an explicit constant.
\end{Rem}
In the potential case, when there exists $\mathcal{F}$ satisfying (\ref{potential}), and when $f$ is strictly monotone, the solution $(u,m)$ of (\ref{systemMFG}) is also the saddle point of the lagrangian $\mathcal{L}$ defined on $\{µ \in L^2(\mathbb{T}^d), µ \geq 0\} \times H^2(\mathbb{T}^d)$ by :
\[
\mathcal{L}(µ,v) = \mathcal{F}(µ) + \int_{\mathbb{T}^d} (\nu \Delta v - \lambda v - H(x, \nabla v))m - \int_{\mathbb{T}^d} \rho v.
\]
Even though the optimal control interpretation presented in \citep{lasry2007mean} is not exactly written in this form, it can be easily checked that the formulations are equivalent, at least formally. The iterations (\ref{iterationcontinuous}) are in this case the result of Uzawa's algorithm on $\mathcal{L}$, in the sense that they are formally the result of a gradient ascent method on the dual problem :
\[
\underset{v \in H^2(\mathbb{T}^d)}{\sup} \underset{ µ\geq 0, \in L^2(\mathbb{T}^d)}{\inf} \mathcal{L}(µ,v).
\]

\begin{Rem}
In the three cases mentioned above (optimal stopping, impulse control and continuous control), the sequence $(u_n)_{n \geq 0}$ defined by the Uzawa's iterations is always a bounded sequence of $H^2(\mathbb{T}^d)$. Therefore, up to a subsequence, $(u_n)_{n \geq 0}$ converges in $H^1(\mathbb{T}^d)$ toward $u \in H^2(\mathbb{T}^d)$. This function $u$ is in fact such that $(u,m)$ is the solution of the MFG system and thus the whole sequence $(u_n)_{n \geq 0}$ converges toward $u$.
\end{Rem}
\subsection{Other possible applications of Uzawa's iterations}
We give here some immediate applications of Uzawa's iterations. First let us note that the operator $-\nu \Delta + \lambda Id$ involved in the three problems above can be replaced by more general elliptic linear operators. Let us also mention that this method is also valid in more general domains than $\mathbb{T}^d$. This method can also be applied in time dependent situations.\\

Another important remark is that Uzawa's iterations can also be applied in the optimal control of PDE governed by inequalities, such that 
\[
\underset{m,  A(m) \leq 0}{\inf} F(m),
\]
where $A$ is a partial differential operator. Such a class of problem is of some importance. For instance we refer to \citep{bertucci2017optimal} for a heuristic argument on why (\ref{optstopvi}) can be interpreted as the optimality conditions for the relaxation of an optimal shape problem. The relaxation is then of the form just mentioned above.

\section{Numerical framework and numerical results}
We present here the discrete versions of the three problems mentioned in the previous section (optimal stopping, impulse control and continuous control MFG systems). We also present numerical results of the implementation of Uzawa's iteration for those problems. 
\subsection{Notations and presentation of the problem}
We give here the notations we are going to use to present the discretized problem we are interested in. We fix a non-negative integer $d$ and we define $h > 0$ by $h = d^{-1}$. We work here on a grid $G_d = \{ (i,j), 1 \leq i,j \leq d\}$ which we interpret as a discretization of the $2$ dimensional torus. Let $f_d : \mathbb{R}^{d^2} \to \mathbb{R}^{d^2}$ be a continuous application. We fix $\xi \in G_d$ and $k_0 > 0$ a real number. We then define for all $v \in \mathbb{R}^{d^2}$ $Mv$ by :
\[
(Mv)_{i,j} = k_0 + v_{(i,j) + \xi}.
\]
We denote by $g : (p_1,p_2,p_3,p_4) \to g(p_1,p_2,p_3,p_4)$ a discretization of the hamiltonian $H : \mathbb{R}^2 \to \mathbb{R}$ defined by $H(p) = \sqrt{1 + |p|^2}$. Thus $g$ is such that for $p_1, p_2 \in \mathbb{R}$ :
\[
g(p_1,p_1,p_2,p_2) = \sqrt{1 + (p_1)^2 + (p_2)^2};
\]
and $g$ is non decreasing with respect to $p_1$ and $p_3$ and non decreasing with respect to $p_2$ and $p_4$. We denote by $\nabla_p g$ the gradient of $g$. We also define the vector of derivatives $D_hv$ of a vector $v \in \mathbb{R}^{d^2}$ by :
\[
(D_hv)_{i,j} = \big( \frac{v_{i+1,j} - v_{i,j}}{h}, \frac{v_{i,j} - v_{i-1,j}}{h}, \frac{v_{i,j+1} - v_{i,j}}{h}, \frac{v_{i,j} - v_{i,j-1}}{h} \big).
\]
For $\nu, \lambda > 0$, we also define the discrete operator $A : \mathbb{R}^{d^2} \to \mathbb{R}^{d^2}$ by :
\[
(Av)_{i,j} = \nu \frac{4 v_{i,j} - v_{i+1,j} - v_{i-1,j} - v_{i,j+1} - v_{i,j-1}}{h} + \lambda v_{i,j}, \forall (i,j) \in G_d;
\]
where we use periodic boundary condition on $G_d$.\\
For $v \in \mathbb{R}^p$ for some $p\geq 0$, we use the notation $v \geq 0$ when for all $1\leq i \leq p$, $v_i \geq 0$. We take an element $\rho_d \in \mathbb{R}^d$, $\rho_d \geq 0$.\\
For the rest of this section, $\mathbb{R}^{d^2}$ is endowed with the scalar product :
\[
<x,y> = \sum_{1 \leq i,j\leq d} h^2 x_{i,j}y_{i,j}
\]

In this section, we present the results of the implementation of Uzawa's iterations to approximate the solutions of the following three problems (each time the unknown is the couple $(u,m)$) :
\begin{equation}\label{numstop}
\begin{cases}
<f_d(m) -Au , v - u> \leq 0 , \forall  v \in \mathbb{R}^{d^2}, v \leq 0;\\
<Au - Av, m> + < u - v, \rho_d> \leq 0 ; \forall v \in \mathbb{R}^{d^2}, v \leq 0;\\
< Au - f_d(m), m> = 0;\\
u \leq 0 ; m \geq 0.
\end{cases}
\end{equation}
\begin{equation}\label{numjump}
\begin{cases}
<f_d(m) -Au , v - u> \leq 0 , \forall  v \in \mathbb{R}^{d^2}, v \leq Mv;\\
<Au - Av, m> + < u - v, \rho_d> \leq 0 ; \forall v \in \mathbb{R}^{d^2}, v \leq Mv;\\
< Au - f_d(m), m> = 0;\\
u \leq Mu ; m \geq 0.
\end{cases}
\end{equation}
\begin{equation}\label{numcontinuous}
\begin{cases}
Au + g(Du) = f_d(m);\\
< Av + \nabla_p g(D_hu)\cdot D_h(v), m> - <v, \rho_d>  = 0 ; \forall v \in \mathbb{R}^{d^2};\\
m \geq 0.
\end{cases}
\end{equation}
Those problems are the discretized version of respectively (\ref{optstopvi}), (\ref{systemimp}) and (\ref{systemMFG}).

\subsection{A remark on the convergence of the discretized problems toward the continuous ones}
Although the convergence of (\ref{numstop}), (\ref{numjump}) and (\ref{numcontinuous}) toward their continuous version is not the objective of this article, we explain here briefly why such a convergence is expected. We give some results on the case of (\ref{numstop}). We refer to \citep{achdou2010mean} for results on (\ref{numcontinuous}).\\

We begin by detailing in which sense $(f_d)_{d \geq 1}$ converges toward $f : L^2(\mathbb{T}^2) \to L^2(\mathbb{T}^2)$. For any sequence $(m_d)_{d \geq 1}$, we define $(\tilde{m}_d)_{d \geq 1} \in (L^2(\mathbb{T}^2))^{\mathbb{N}}$ by
\[
\tilde{m}_d(x,y) = (m_d)_{i,j} \text{ if }\begin{cases} i-1 \leq x \times d < i ,\\
 j-1 \leq y \times d < j.
\end{cases}
\]
We assume that if $(\tilde{m}_d)_{d \geq 1}$ converges toward $m$ in $L^2(\mathbb{T}^2)$, then $(\tilde{f}_d(m_d))_{d \geq 1}$ converges toward $f(m)$ in $L^2(\mathbb{T}^d)$. We also assume, using the same notations, that $(\tilde{\rho}_d)_{d \geq 1}$ converges toward $\rho$ in $L^2(\mathbb{T}^2)$. We now start by proving a lemma which gives the main idea for the convergence of the finite problem.
\begin{Lemma}\label{lemmanum}
Let us assume that $f_d : \mathbb{R}^{d^2} \to \mathbb{R}^{d^2}$ is $\alpha$ monotone and that $(u,m)$ is the only solution of (\ref{numstop}). For any $\epsilon_1,\epsilon_2 > 0$ and $(v,µ)$ such that 
\begin{equation}\label{ineqlemme}
\begin{cases}
<f_d(µ) -Av , µ' - µ> \geq -\epsilon_1 , \forall  µ' \in \mathbb{R}^{d^2}, µ' \geq 0;\\
<Av - Av', µ> + < v - v', \rho> \leq \epsilon_2 ; \forall v' \in \mathbb{R}^{d^2}, v' \leq 0;\\
v \leq 0 ; µ \geq 0;
\end{cases}
\end{equation}
the following holds :
\[
||m - µ||^2 \leq \frac{\epsilon_1 + \epsilon_2}{\alpha}.
\]
\end{Lemma}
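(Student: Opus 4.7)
The plan is to exploit the $\alpha$-monotonicity of $f_d$ by pairing each variational inequality satisfied by the exact solution $(u,m)$ of (\ref{numstop}) with the corresponding perturbed inequality satisfied by $(v,\mu)$, the cross-terms involving $A$ cancelling up to an $O(\epsilon_1+\epsilon_2)$ error thanks to the complementarity structure.

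First, I would derive from (\ref{numstop}) that $(u,m)$ satisfies the ``dual'' variational inequality
\[
\langle f_d(m) - Au, \mu' - m\rangle \geq 0, \qquad \forall \mu' \geq 0.
\]
Indeed, testing the first line of (\ref{numstop}) against $v=0$ and $v=2u$ (both $\leq 0$) together with $u\leq 0$ and $m\geq 0$ shows pointwise that $f_d(m)-Au\geq 0$, and the complementarity $\langle Au - f_d(m), m\rangle = 0$ then yields the claim. This is the exact analogue, at the discrete level, of the passage from (\ref{optstopvi}) to (\ref{stopgoodvi}) used throughout the paper.

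Next, I would test this inequality with $\mu'=\mu$, and test the first line of (\ref{ineqlemme}) with $\mu'=m$, then add the two. The terms involving $f_d$ combine, using $\alpha$-monotonicity, into $-\alpha\|m-\mu\|^2$, while the remaining terms produce $\langle Au - Av, m-\mu\rangle$, giving a lower bound
\[
\langle Au - Av, m-\mu\rangle \;\geq\; \alpha\|m-\mu\|^2 - \epsilon_1.
\]
Symmetrically, I would bound this quantity from above using the second lines of (\ref{numstop}) and (\ref{ineqlemme}): testing the exact inequality at $v'=v$ (which is admissible since $v\leq 0$) gives $\langle Au - Av, m\rangle \leq \langle v-u, \rho_d\rangle$, and testing the perturbed inequality at $v'=u$ (admissible since $u\leq 0$) gives $\langle Au - Av, \mu\rangle \geq \langle v-u, \rho_d\rangle - \epsilon_2$. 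Subtracting yields $\langle Au - Av, m-\mu\rangle \leq \epsilon_2$, the $\rho_d$-terms cancelling exactly.

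Combining the two sandwiching estimates yields $\alpha\|m-\mu\|^2 \leq \epsilon_1 + \epsilon_2$, which is the desired conclusion. The only slightly non-obvious step is the very first one, the extraction of the dual inequality for $(u,m)$ on the cone $\{\mu'\geq 0\}$; once this is in hand the remainder is essentially the same bookkeeping as in the proof of Theorem \ref{newiteration}, with $\epsilon_1$ and $\epsilon_2$ tracking the defect in each of the two variational inequalities.
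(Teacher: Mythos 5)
Your proof is correct and is essentially the paper's argument written out in more detail: the paper likewise tests the perturbed inequalities at $\mu'=m$ and $v'=u$, tests the exact second inequality of (\ref{numstop}) at the given $v$, and implicitly invokes the dual inequality $\langle f_d(m)-Au,\mu-m\rangle\geq 0$ that you make explicit, before concluding by $\alpha$-monotonicity. The one small slip is in your justification of that dual inequality: testing the first line of (\ref{numstop}) at $v=0$ and $v=2u$ only yields the complementarity $\langle f_d(m)-Au,u\rangle=0$, not the pointwise sign $f_d(m)-Au\geq 0$; to get the latter, test at $v=u+w$ for arbitrary $w\leq 0$ (e.g.\ $w=-e_{i,j}$), after which the rest of your argument goes through verbatim.
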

\begin{proof}
Because of the inequalities verified by $(v, µ)$,
\[
\begin{cases}
<f_d(µ) -Av , m - µ> \geq -\epsilon_1;\\
<Av - Au, µ> + < v - u, \rho> \leq \epsilon_2.
\end{cases}
\]
Therefore, using the fact that $(u,m)$ is the solution of (\ref{numstop}), we deduce that
\[
\begin{cases}
<f_d(µ) - f_d(m),m-µ> + <Au -Av , m - µ> \geq -\epsilon_1;\\
<Av - Au, µ - m> \leq \epsilon_2.
\end{cases}
\]
Thus we obtain that
\[
<f_d(µ) - f_d(m),µ- m> \leq \epsilon_1 + \epsilon_2.
\]
Using the $\alpha$ monotonicity of $f_d$ the result is proved.
\end{proof}
\begin{Rem}
If (\ref{ineqlemme}) is not satisfied for all $µ', v'$ but only for $m,u$, the results of the lemma still holds.
\end{Rem}
We now show an exemple of result of convergence.
\begin{Prop}
Let us assume that $f : L^2(\mathbb{T}^2) \to L^2(\mathbb{T}^2)$ is $\alpha$ monotone and let us assume that there exists $(u^*, m^*)$, unique solution of (\ref{optstopvi}). We also assume that for every $d\in \mathbb{N}, d \geq 1$, $f_d : \mathbb{R}^{d^2} \to \mathbb{R}^{d^2}$ is $\alpha$ monotone and that there exists a unique solution $(u_d,m_d)$ of (\ref{numstop}). Then the following holds 
\[
||m_d - m^*_d||_{\mathbb{R}^{d^2}} \underset{d \to \infty}{\longrightarrow} 0;
\]
where $m^*_d \in \mathbb{R}^{d^2}$ is such that defining $\tilde{m^*}$ by 
\[
\tilde{m^*}(x,y) = (m^*_d)_{i,j} \text{ if }\begin{cases} i-1 \leq x \times d < i ,\\
 j-1 \leq y \times d < j;
\end{cases}
\]
we have the convergence :
\[
||m^* - \tilde{m^*}||_{L^2(\mathbb{T}^2)} \underset{d \to \infty}{\longrightarrow} 0.
\]
\end{Prop}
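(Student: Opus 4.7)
The plan is to apply Lemma~\ref{lemmanum} with $(v, \mu) = (u_d^*, m_d^*)$, where $u_d^*$ and $m_d^*$ are canonical cell-averaged discretisations of the continuous solution $(u^*, m^*)$ of (\ref{optstopvi}). This specific $m_d^*$ satisfies the hypothesis of the proposition, and any other admissible sequence differs from it by one whose piecewise-constant extension tends to $0$ in $L^2(\mathbb{T}^2)$, so it suffices to treat the canonical choice. Thanks to the Remark following Lemma~\ref{lemmanum}, I only need the two inequalities of (\ref{ineqlemme}) to hold at the single pair $(v', \mu') = (u_d, m_d)$, i.e.\ at the discrete solution of (\ref{numstop}). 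Proving them with constants $\epsilon_1(d), \epsilon_2(d) \to 0$ then yields, via the Lemma,
\[
\|m_d - m_d^*\|_{\mathbb{R}^{d^2}}^2 \leq \frac{\epsilon_1(d) + \epsilon_2(d)}{\alpha} \underset{d \to \infty}{\longrightarrow} 0,
\]
which is the desired convergence.

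For the first inequality I would split
\[
\langle f_d(m_d^*) - A u_d^*, m_d - m_d^* \rangle = \langle f_d(m_d^*) - A u_d^*, m_d \rangle - \langle f_d(m_d^*) - A u_d^*, m_d^* \rangle,
\]
and exploit two properties of $(u^*, m^*)$ recalled in Section~2.1: the pointwise sign of $f(m^*) + \nu \Delta u^* - \lambda u^*$, and the complementarity identity $\int_{\mathbb{T}^2} (f(m^*) + \nu \Delta u^* - \lambda u^*) m^* = 0$. The standing assumption on $(f_d)_{d \geq 1}$ gives the $L^2(\mathbb{T}^2)$-convergence of the extension of $f_d(m_d^*)$ to $f(m^*)$, and consistency of the five-point stencil on the $H^2$-regular function $u^*$ gives an $L^2$ error of order $o(1)$ between the extension of $A u_d^*$ and $(-\nu \Delta + \lambda) u^*$. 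The second summand is then a Riemann sum approximation of the complementarity integral, hence tends to $0$. The first summand, paired with the non-negative extension $\tilde{m}_d$, is controlled via Cauchy--Schwarz together with the uniform $L^2$-bound on $(\tilde{m}_d)_{d \geq 1}$ that follows from the $\alpha$-monotonicity of $f_d$ and the uniform discrete $H^2$-type bound on $(u_d)_{d \geq 1}$.

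The second inequality reduces to bounding $\langle A u_d^* - A u_d, m_d^* \rangle + \langle u_d^* - u_d, \rho_d \rangle$. Its continuous counterpart is obtained by testing the second variational inequality of (\ref{optstopvi}) against a function $v \leq 0$ whose piecewise-constant extension matches that of $u_d$ up to a low-order correction preserving the sign. Using the symmetry of $A$ to move it onto $m^*$, the consistency of the discretised diffusion operator on $m^* \in H^1(\mathbb{T}^2)$, and the $L^2$-convergence of $\tilde{\rho}_d$ to $\rho$ stated at the start of the subsection, I obtain the required bound $\epsilon_2(d) \to 0$.

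The main obstacle is the consistency estimate for $A$ coupled with the propagation of the sign conditions through discretisation: the distributional sign information on $f(m^*) + \nu \Delta u^* - \lambda u^*$ does not translate into pointwise inequalities at the grid nodes, so one must pair the discretisation error against the non-negative vector $m_d$ (for the first inequality) or $m_d^*$ (for the second) and rely on the uniform $L^2$-boundedness of both sequences to keep the error $o(1)$. Once these consistency estimates are established, Lemma~\ref{lemmanum} immediately closes the argument.
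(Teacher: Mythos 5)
Your proposal is correct and follows essentially the same strategy as the paper: discretise the continuous solution $(u^*,m^*)$ by a local averaging procedure (the paper uses mollification against a partition of unity evaluated at grid nodes, you use cell averages — an immaterial difference), verify that the resulting pair is an approximate solution of (\ref{numstop}) with consistency errors tending to zero, and conclude via Lemma~\ref{lemmanum} together with the remark that the inequalities need only be tested at the discrete solution $(u_d,m_d)$, using the uniform bounds on $m_d$ and $Au_d$ and the $\alpha$-monotonicity of $f_d$. If anything, your discussion of how the sign and complementarity conditions survive discretisation is more explicit than the paper's, which simply asserts the consistency estimates.
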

\begin{proof}
We denote for all $d \geq 1 $ $(u_d,m_d)$ the only solution of (\ref{numstop}) and by $(u,m)$ the only solution of (\ref{optstopvi}). Our aim is to build for all $d\geq 1$, $v_d, µ_d \in \mathbb{R}^{d^2}$, an approximate solution of (\ref{numstop}) using $(u,m)$. We define $x_{i,j} = ((i-1)h, (j-1)h)$ for $1 \leq i,j \leq d$. And we consider a $C^{\infty}$ partition of the unity $(\phi_d^{i,j})_{1\leq i,j\leq d}$ subordinate to the cover $(B(x_{i,j}, h\sqrt{2}))_{1 \leq i,j \leq d}$, where $B(x,\delta)$ denotes the open ball of center $x$ and radius $\delta$. We define $(\varphi_d^{i,j})_{1\leq i,j\leq d}$ by $\varphi_d^{i,j} = (\int \phi_d^{i,j})^{-1}\phi_d^{i,j}$. We then define $v_d, µ_d \in \mathbb{R}^{d^2}$ by :
\[
(v_d)_{i,j} = (u*\varphi_d^{i,j})(x_{i,j})
\]
\[
(µ_d)_{i,j} = (m*\varphi_d^{i,j})(x_{i,j})
\]
It is easy to verify that there exists $(\epsilon_n)_{n \geq 0} \in \mathbb{R}^{\mathbb{N}}$ and $(\epsilon'_{n})_{n\geq 0}, (\tilde{\epsilon}_n)_{n \geq 0}$ such that $\epsilon'_{n}, \tilde{\epsilon}_n \in \mathbb{R}^{n^2}$ for every $n \geq 1$ and the three sequences converge to zero together with 
\[
< f_d(µ_d) - Av_d, µ_d> \leq \epsilon_d
\]
\[
< f_d(µ_d) - Av_d, m_d> \geq -<\epsilon'_d,m_d>
\]
\[
< A v_d, µ_d> - <v_d, \rho_d> \leq \epsilon_d 
\]
\[
< A u_d, µ_d> - <u_d, \rho_d> \geq -<\tilde{\epsilon}_d,Au_d>
\]
Thus using lemma \ref{lemmanum} (and the remark following) we deduce that
\[
<f(µ_d) - f(m_d), µ_d - m_d> \leq  < \epsilon'_d, m_d> +<\tilde{\epsilon}_d , Au_d> + 2 \epsilon_d.
\]
Using estimates on $(Au_n)_{n \geq 0}$ and $(m_n)_{n \geq 0}$ (which are easy to obtain) we deduce, using the $\alpha$ monotonicity of $f_d$ that
\[
<m_d - µ_d, m_d - µ_d>  \underset{ d \to \infty}{\longrightarrow} 0.
\]
The result then follows.
\end{proof}

\subsection{Numerical results}
\subsubsection{The optimal stopping case}
In figure \ref{fig:stop}, we give the density $m$ and its Lagrange multiplier $u$ obtained after $20$ Uzawa's iterations. We use a standard Uzawa's algorithm to perform numerically the projection which updates the Lagrange multiplier at each step. The parameters of the model are 

\begin{center}
   \begin{tabular}{| l | c | }
     \hline
     $\nu$ & $0.02$  \\ \hline
     $\lambda$ & $1 $ \\ \hline
     $d$ & $40$  \\ \hline
     $f(m)$ & $f_0 + m + (- \Delta + Id)^{-1}m$  \\ \hline
     $f_0$ & $\cos(2\pi x) + 2 \cos(2 \pi (y-x)) + \cos(6 \pi x)$ \\ \hline
     $\delta$ & $0.5$\\ \hline
     $\rho$ & $1$\\
     \hline
   \end{tabular}
 \end{center}

\begin{figure}
\centering
\begin{subfigure}{.5\textwidth}
  \centering
  \includegraphics[width=.9\linewidth]{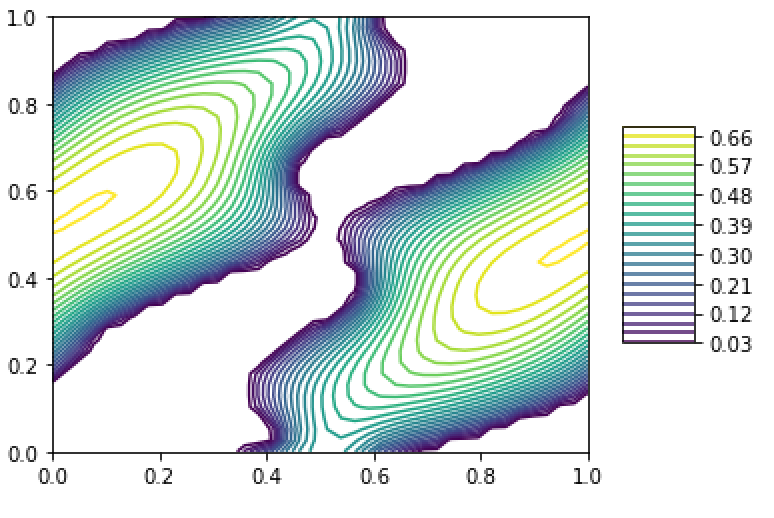}
  \caption{Contours of $m$}
  \label{fig:sub1}
\end{subfigure}%
\begin{subfigure}{.5\textwidth}
  \centering
  \includegraphics[width=.9\linewidth]{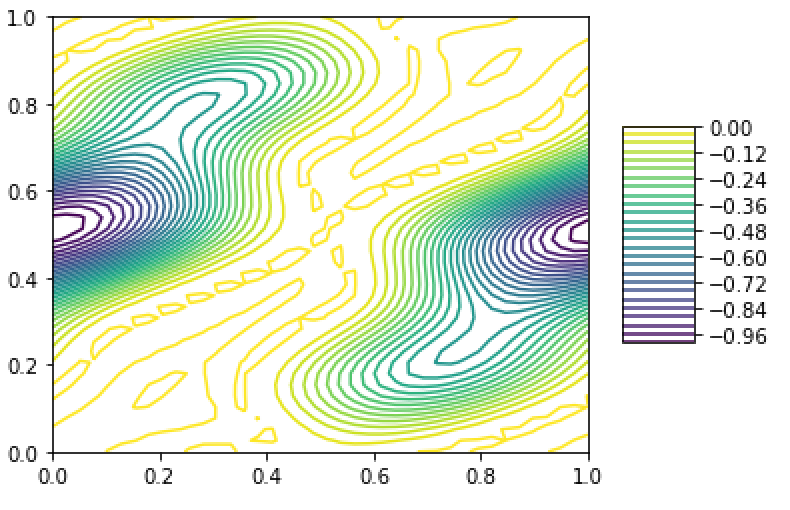}
  \caption{Contours of $u$}
  \label{fig:sub2}
\end{subfigure}
\begin{subfigure}{.5\textwidth}
  \centering
  \includegraphics[width=.9\linewidth]{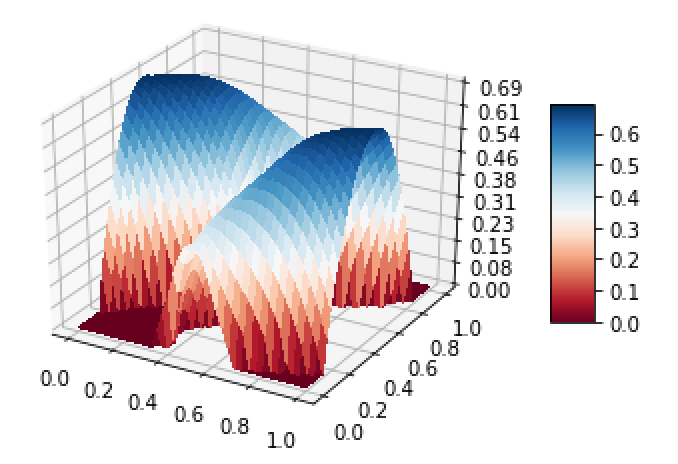}
  \caption{Graph of $m$}
  \label{fig:sub3}
\end{subfigure}%
\begin{subfigure}{.5\textwidth}
  \centering
  \includegraphics[width=.9\linewidth]{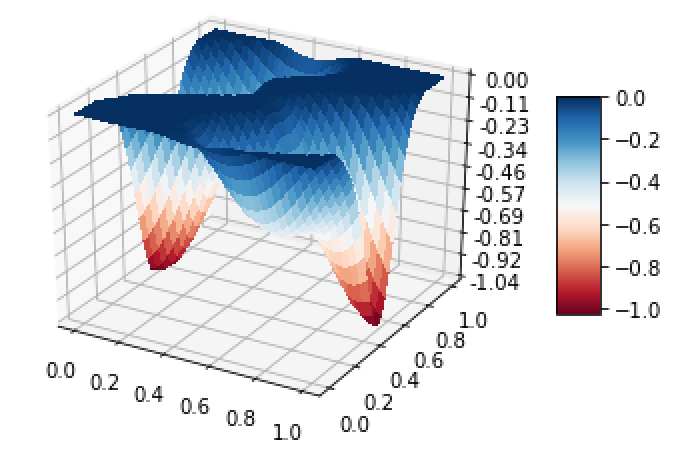}
  \caption{Graph of $u$}
  \label{fig:sub4}
\end{subfigure}
\caption{Uzawa's iterations for (\ref{numstop})}
\label{fig:stop}
\end{figure}

\subsubsection{The impulse control case}
In figure \ref{fig:imp} we give the density $m$ and its Lagrange multiplier $u$ obtained after $40$ Uzawa's iterations. We use at each step Uzawa's algorithm to perform numerically the projection which updates the Lagrange multiplier in our Uzawa's iterations. The parameters of the model are 

\begin{center}
   \begin{tabular}{| l | c | }
     \hline
     $\nu$ & $0.02$  \\ \hline
     $\lambda$ & $1 $ \\ \hline
     $d$ & $40$  \\ \hline
     $f(m)$ & $f_0 + m + (-\Delta + Id)^{-1}m$  \\ \hline
     $f_0$ & $\cos(2\pi x) + 2 \cos(2 \pi (y-x)) + \cos(6 \pi x)$ \\ \hline
     $k_0$ & $0.5$ \\ \hline
     $\xi$ & $ (20/7,0)$ \\ \hline
     $\delta$ & $0.5$\\ \hline
     $\rho$ & $1$\\
     \hline
   \end{tabular}
 \end{center}

\begin{figure}
\centering
\begin{subfigure}{.5\textwidth}
  \centering
  \includegraphics[width=.9\linewidth]{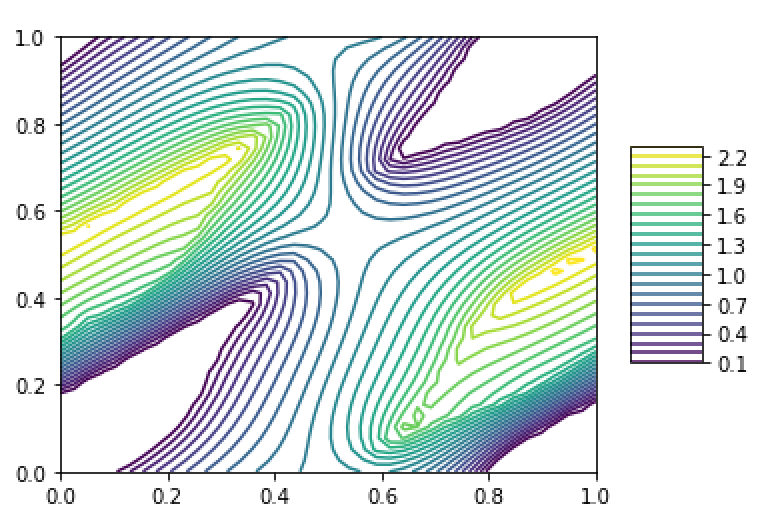}
  \caption{Contours of $m$}
  \label{fig:sub5}
\end{subfigure}%
\begin{subfigure}{.5\textwidth}
  \centering
  \includegraphics[width=.9\linewidth]{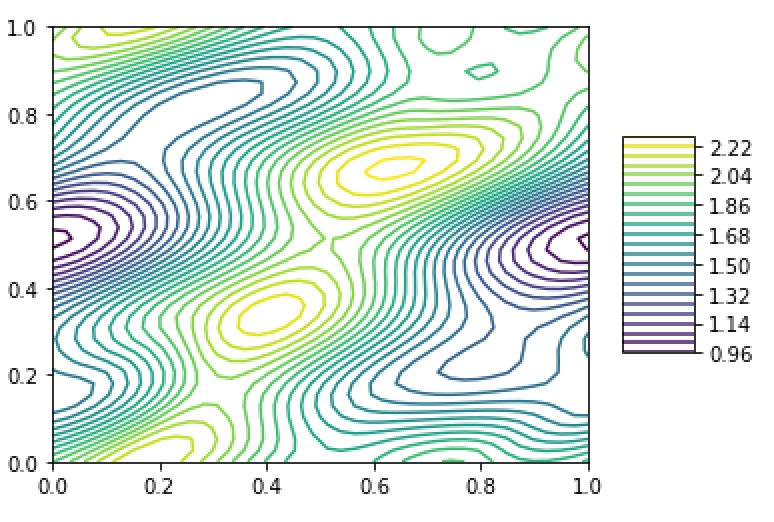}
  \caption{Contours of $u$}
  \label{fig:sub6}
\end{subfigure}
\begin{subfigure}{.5\textwidth}
  \centering
  \includegraphics[width=.9\linewidth]{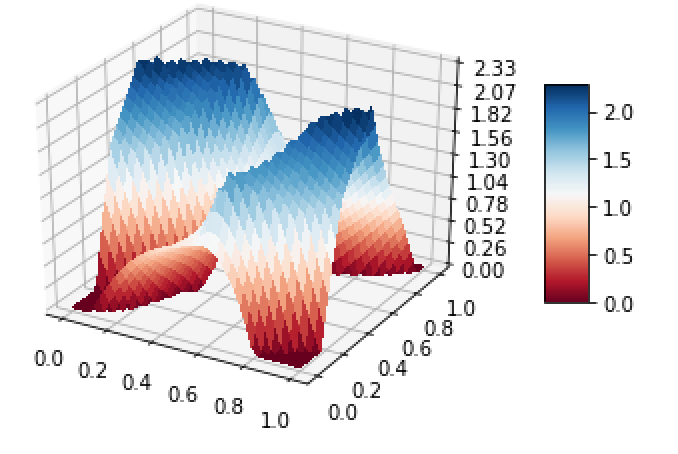}
  \caption{Graph of $m$}
  \label{fig:sub7}
\end{subfigure}%
\begin{subfigure}{.5\textwidth}
  \centering
  \includegraphics[width=.9\linewidth]{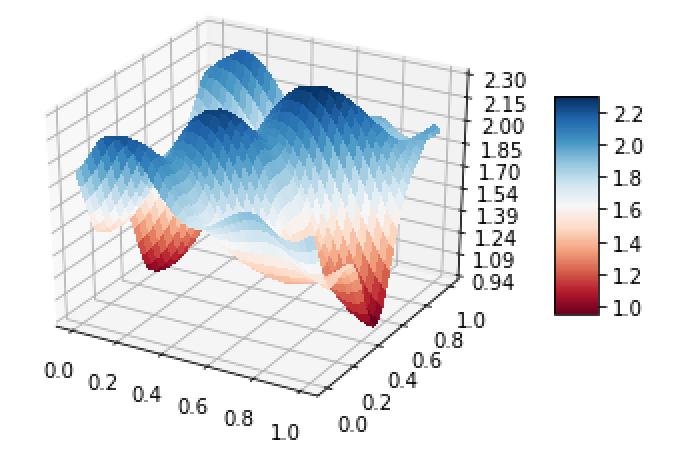}
  \caption{Graph of $u$}
  \label{fig:sub8}
\end{subfigure}
\caption{Uzawa's iterations for (\ref{numjump})}
\label{fig:imp}
\end{figure}

\subsubsection{The continuous control case}
In figure \ref{fig:cont} we give the density $m$ and its Lagrange multiplier $u$ obtained after $3000$ Uzawa's iterations. We use a standard Newton method on the finite differences scheme  at each step to solve the Hamilton-Jacobi-Bellmann equation which updates the Lagrange multiplier. To compute at each iteration $n$ the value of $L_{u_n}^{*-1} \rho$, we use a biconjugate gradient stabilized method. The parameters of the model are 

\begin{center}
   \begin{tabular}{| l | c | }
     \hline
     $\nu$ & 0.05  \\ \hline
     $\lambda$ & 1  \\ \hline
     d & 40  \\ \hline
     $f(m)$ & $f_0 + m + (-\Delta + Id)^{-1} m$ \\ \hline
     $f_0(x,y)$ & $\cos(2 \pi x) + \cos (2 \pi y) + \cos(4 \pi x)$ \\ \hline
     $H(p)$ & $\sqrt{1 + |p|^2}$ \\ \hline
     $(\delta_n)_{n \geq 0}$ & 0.05 $\forall n$ \\ \hline
     $\rho$ & 1\\
     \hline
   \end{tabular}
 \end{center}

\begin{figure}
\centering
\begin{subfigure}{.5\textwidth}
  \centering
  \includegraphics[width=.9\linewidth]{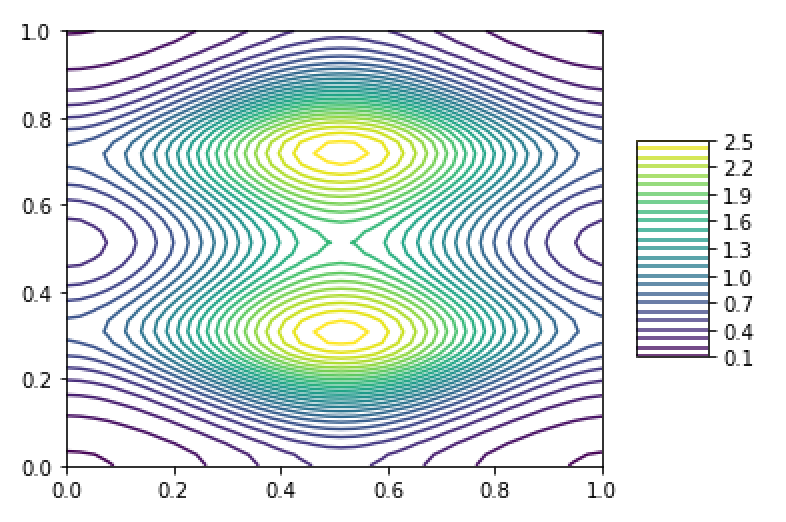}
  \caption{Contours of $m$}
  \label{fig:sub9}
\end{subfigure}%
\begin{subfigure}{.5\textwidth}
  \centering
  \includegraphics[width=.9\linewidth]{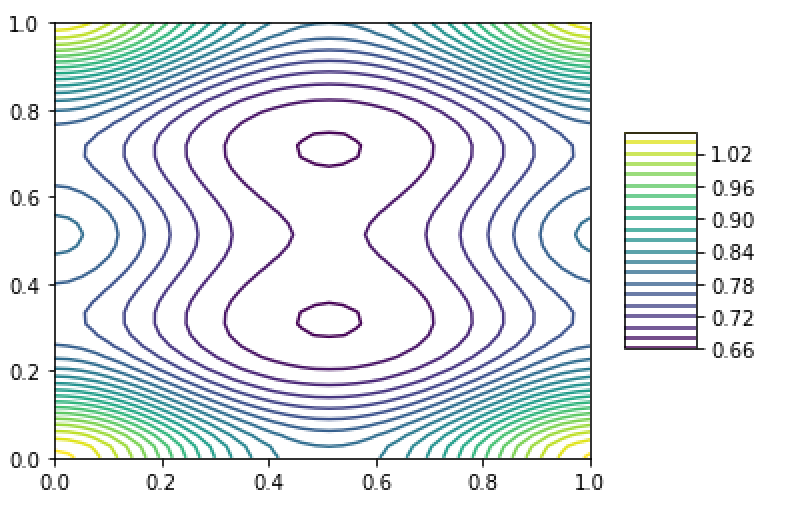}
  \caption{Contours of $u$}
  \label{fig:sub10}
\end{subfigure}
\begin{subfigure}{.5\textwidth}
  \centering
  \includegraphics[width=.9\linewidth]{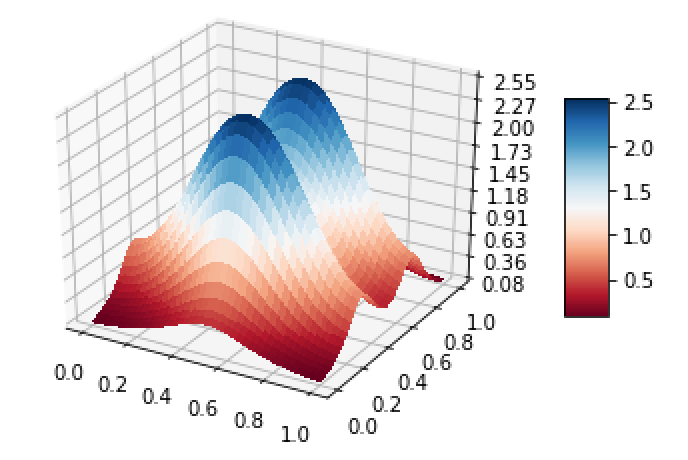}
  \caption{Graph of $m$}
  \label{fig:sub11}
\end{subfigure}%
\begin{subfigure}{.5\textwidth}
  \centering
  \includegraphics[width=.9\linewidth]{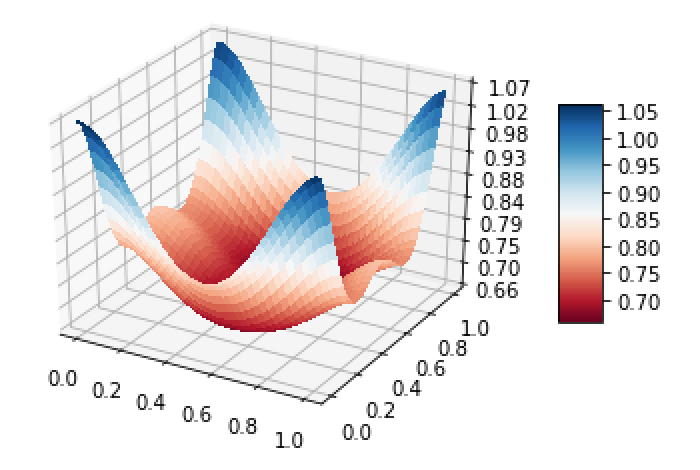}
  \caption{Graph of $u$}
  \label{fig:sub12}
\end{subfigure}
\caption{Uzawa's iterations for (\ref{numcontinuous})}
\label{fig:cont}
\end{figure}

\section*{Acknowledgments}
I would like to thank Pr. Yves Achdou (Universit\'e Paris Descartes) for his helpful advices on MFG and numerical simulations.

\bibliographystyle{plainnat}
\bibliography{bertucci_2018_uzawa}
\end{document}